\documentclass[12pt,a4paper]{article}
\usepackage{amssymb}
\usepackage{amsmath}
\usepackage{amsthm}
\usepackage{indentfirst}
\usepackage{ot1patch}
\usepackage{psfrag}
\usepackage{rotate}
\usepackage{mathtools}
\usepackage{hyperref}

\newtheorem{tw}{Theorem}[section]
\newtheorem{pr}[tw]{Proposition}
\newtheorem{lm}[tw]{Lemma}

\theoremstyle{definition}

\DeclareMathOperator{\Irr}{Irr}

\author{\L ukasz Matysiak\\
	Kazimierz Wielki University\\
	Bydgoszcz, Poland \\
	lukmat@ukw.edu.pl}
\title{Polynomial composites and certain types of fields extensions}
%\date{}

\begin{document}
	
	\maketitle

\begin{abstract}
In this paper I consider polynomial composites with the coefficients from $K\subset L$. We already know many properties, but we do not know the answer to the question of whether there is a relationship between composites and field extensions. I present the characterization of some known field extensions in terms of polynomial composites.
This paper contains the opening problem of characterization of ideals in polynomial composites with respect to various field extensions.
I also present the full possible characterization of certain field extensions.
\end{abstract}

\begin{table}[b]\footnotesize\hrule\vspace{1mm}
	Keywords: field extensions, polynomial, finite fields extensions, Noetherian ring\\
	2010 Mathematics Subject Classification:
	Primary 12F99, Secondary 12F10.
\end{table}

\section{Introduction}
	
By a ring $R$ we mean a commutative ring with unity. 
Denote by $R^{\ast}$ the group of all invertible elements of $R$. The set of all irreducible elements of $R$ be denoted by $\Irr R$. By $\Irr R$ we mean the set of all irreducible elements of $R$.
A Noetherian ring is a ring that satisfies the ascending chain condition on ideals; that is, given any increasing sequence of ideals $I_1\subset I_2\subset\dots $, there exists a natural number (positive integer) $n$ such that $I_n=I_{n+1}=\dots$. There are other, equivalent, definitions: every ideal $I\subset R$ is finitely generated, every non-empty set of ideals of $R$ has a maximal element. 

\medskip

Let $K\subset L$ be a field extension. Let us denote by $[L\colon K]$ the degree of field extension $K\subset L$. In this paper, we will use the following extensions and recall definitions:
\begin{itemize}
	\item[(a) ] a finite extension - the extension that has a finite degree,
	\item[(b) ] an algebraic extension - the extension such that every element of $L$ is algebraic over $K$, i.e. every element of $L$ is a root of some non-zero polynomial with coefficients in $K$,
	\item[(c) ] a separable extension - the algebraic extension such that the minimal polynomial of every element of $L$ over $K$ is separable, i.e., has no repeated roots in an algebraic closure over $K$,
	\item[(d) ] a normal extension - the algebraic extension such that every irreducible polynomial in $K[X]$ that has a root in $L$ completely factors into linear factors over $L$. 
	\item[(e) ] a Galois extension - the algebraic extension such that is both separable and normal.
\end{itemize} 

\medskip

If $K\subset L$ is a Galois extension, then $Aut_K L$ is called the Galois group of $K\subset L$ and denoted by $G(L\mid K)$.
For any subgroup $H$ of $G(L\mid K)$ by $L^H$ denote the corresponding fixed field, i.e. the set of those elements of $L$ which are fixed by every automorphism in $H$.
The algebraic closure of field $K$ denote by $a(K)$.

\medskip

A field $K$ is called perfect if every finite extension of $K$ is separable. For example $\mathbb{Q}$, $\mathbb{C}$, every algebraically closed field.

\medskip

D.D.~Anderson, D.F.~Anderson, M. Zafrullah in \cite{1} called object $A+XB[X]$ as a composite for $A\subset B$ fields. There are a lot of works where composites are used as examples to show some properties. But the most important works are presented below.

\medskip

In 1976 \cite{y1} authors considered the structures in the form $D+M$, where $D$ is a domain and $M$ is a maximal ideal of ring $R$, where $D\subset R$. Later (in \cite{mm1}), I proved that in composite in the form $D+XK[X]$, where $D$ is a domain, $K$ is a field with $D\subset K$, that $XK[X]$ is a maximal ideal of $K[X]$. 
Next, Costa, Mott and Zafrullah (\cite{y2}, 1978) considered composites in the form $D+XD_S[X]$, where $D$ is a domain and $D_S$ is a localization of $D$ relative to the multiplicative subset $S$. 
Zafrullah in \cite{y3} continued research on structure $D+XD_S[X]$ but 
he showed that if $D$ is a GCD-domain, then the behaviour of $D^{(S)}=\{a_0+\sum a_iX^i\mid a_0\in D, a_i\in D_S\}=D+XD_S[X]$ depends upon the relationship between $S$ and the prime ideals $P$ od $D$ such that $D_P$ is a valuation domain (Theorem 1, \cite{y3}).
In 1991 there was an article (\cite{1}) that collected all previous composites and the authors began to create a theory about composites creating results. In this paper, the structures under consideration were officially called as composites. 
After this article, various minor results appeared. But the most important thing is that composites have been used in many theories as examples. 
I have researched many properties of composites in \cite{mm1} and \cite{mm2}.

\medskip

The main motivation of this paper is to answer the following question.

\medskip

\noindent
{\bf Question 1:}\\
Is there a relationship between certain field extensions $K\subset L$ and polynomial composites $K+XL[X]$?

\medskip

In the third chapter I present a full possible characterization of polynomial composites in the form $K+XL[X]$, where $K, L$ are fields, with respect to a given extension with appropriate additional assumptions. I also present a full possible characterization of some extensions of fields $K\subset L$ expressed in the language of polynomial composites $K+XL[X]$ as Noetherian rings with appropriate assumptions.

\medskip

In the fourth chapter I present the consideration of ideals in polynomial composites $K+XL[X]$ assuming that $K\subset L$ is a certain field extension.

\medskip

In the fifth chapter we can found a full posible characterization of considered extensions (Theorems \ref{t3}, \ref{t4}). 

\medskip

This paper is the start of work on the inverse Galois problem. In Galois theory, the inverse Galois problem concerns whether or not every finite group appears as the Galois group of some Galois extension of the rational numbers $\mathbb{Q}$. This problem, first posed in the early 19th century is unsolved. The presented results can be used as mathematical tools. All the Propositions contained in this paper hold for a field of any characteristic, and therefore also for finite fields. We also have a characterization of the Galois extensions (Theorem \ref{t3}, \ref{t4}). The inverse Galois problem can be solved by switching to polynomial composites or to nilpotent elements.

\section{Auxiliary Lemmas}

\begin{lm}
	\label{ll2}
	Let $\varphi\colon K_1\to K_2$ be an isomorphism of fields and $\varPsi\colon K_1[X]\to K_2[X]$ be an isomorphism of polynomials ring. If polynomial $f_1\in \Irr K_1[X]$, $f_1$ has a root $a_1$ in an extended $L_1$ of $K_1$ and polynomial $f_2=\varPsi (f_1)$ has a root $a_2$ in an extended $L_2$ of $K_2$, then there exists $\varPsi'\colon K_1(a_1)\to K_2(a_2)$, which is an extension of $\varphi$ and $\varPsi'(a_1)=a_2$ holds.
\end{lm}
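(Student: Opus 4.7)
The plan is to factor the desired extension through the quotient rings $K_i[X]/(f_i)$, which provide a canonical model for the simple algebraic extensions $K_i(a_i)$, and to transport the isomorphism $\varPsi$ across these identifications.

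First I would note that since $f_1\in\Irr K_1[X]$ and $\varPsi$ is an isomorphism of polynomial rings, the image $f_2=\varPsi(f_1)$ is irreducible in $K_2[X]$; in particular $(f_1)$ and $(f_2)$ are maximal, the quotients $K_1[X]/(f_1)$ and $K_2[X]/(f_2)$ are fields, and $\varPsi$ descends to an isomorphism $\overline{\varPsi}\colon K_1[X]/(f_1)\to K_2[X]/(f_2)$ which agrees with $\varphi$ on constants and sends the class of $X$ to the class of $X$.

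Next I would invoke the evaluation homomorphisms $\varepsilon_i\colon K_i[X]\to L_i$, $X\mapsto a_i$. Since $f_i$ is irreducible and vanishes at $a_i$, it is (up to a unit) the minimal polynomial of $a_i$ over $K_i$; hence $\ker\varepsilon_i=(f_i)$ and $\mathrm{Im}\,\varepsilon_i=K_i[a_i]=K_i(a_i)$. Passing to the quotient yields canonical isomorphisms $\alpha_i\colon K_i[X]/(f_i)\to K_i(a_i)$ sending the class of $X$ to $a_i$ and restricting to the identity on $K_i$.

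Finally I would define
\[
\varPsi'\;=\;\alpha_2\circ\overline{\varPsi}\circ\alpha_1^{-1}\colon K_1(a_1)\longrightarrow K_2(a_2),
\]
a composition of three isomorphisms and therefore itself an isomorphism. Chasing $c\in K_1$ through the three maps gives $\varPsi'(c)=\varphi(c)$, so $\varPsi'$ extends $\varphi$; chasing $a_1$ through gives $\alpha_1^{-1}(a_1)=X+(f_1)\mapsto X+(f_2)\mapsto a_2$, so $\varPsi'(a_1)=a_2$, as required.

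There is no serious obstacle: the whole argument is bookkeeping built on the universal property of the quotient ring. The only place one must be careful is in verifying that $\varPsi$ indeed induces a well-defined map on the quotients (which amounts to checking $\varPsi((f_1))=(f_2)$, immediate from $f_2=\varPsi(f_1)$ and $\varPsi$ being a ring isomorphism) and in confirming that $\varPsi$ fixes the indeterminate $X$, which is the implicit content of calling $\varPsi$ an isomorphism of polynomial rings extending $\varphi$.
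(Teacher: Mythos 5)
Your proof is correct: the quotient-ring argument via $K_i[X]/(f_i)$, the evaluation maps $\varepsilon_i$, and the composite $\alpha_2\circ\overline{\varPsi}\circ\alpha_1^{-1}$ is the standard and complete way to establish this lemma. The paper itself gives no proof, only a citation to Browkin's \emph{Field Theory} (Lemma 2, p.~105), and the argument given there is essentially the same classical one you reconstruct; you are also right to flag that the statement leaves implicit the hypothesis that $\varPsi$ is the coefficient-wise extension of $\varphi$ fixing $X$, without which the conclusion $\varPsi'(a_1)=a_2$ extending $\varphi$ would not follow.
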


\begin{proof}
	\cite{TC}, Lemma 2, p. 105.
\end{proof}

\begin{lm}
	\label{ll1}
	If $\varphi\colon K\to L$ be an embedding field $K$ to an algebraically closed field $L$, and $K'$ be an algebraic extension of $K$, then there exists an embedding $\varPsi\colon K'\to L$ which is an extension of $\varphi$.
\end{lm}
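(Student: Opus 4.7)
The plan is to prove this by a standard Zorn's lemma argument, using Lemma \ref{ll2} to rule out a proper maximal extension. First I would introduce the set
\[
\mathcal{S}=\{(F,\psi):K\subseteq F\subseteq K',\ \psi\colon F\to L\text{ is an embedding with }\psi|_K=\varphi\},
\]
which is non-empty since $(K,\varphi)\in\mathcal{S}$. I would partially order $\mathcal{S}$ by declaring $(F_1,\psi_1)\le(F_2,\psi_2)$ iff $F_1\subseteq F_2$ and $\psi_2$ restricts to $\psi_1$ on $F_1$.

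Next I would verify the hypothesis of Zorn's lemma: given a chain $\{(F_i,\psi_i)\}_{i\in I}$ in $\mathcal{S}$, the union $\widetilde F=\bigcup_i F_i$ is again an intermediate field between $K$ and $K'$, because in a chain any two $F_i$'s are comparable, so sums, products and inverses of elements in the union stay in one of the $F_i$'s. The compatibility condition guarantees that $\widetilde\psi(x):=\psi_i(x)$ whenever $x\in F_i$ is well defined, and it is straightforward to check that $\widetilde\psi$ is a field embedding extending $\varphi$. Thus $(\widetilde F,\widetilde\psi)$ is an upper bound for the chain in $\mathcal{S}$, and Zorn's lemma produces a maximal element $(F^{*},\psi^{*})$.

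The heart of the proof is to show $F^{*}=K'$. Suppose for contradiction that there is some $a\in K'\setminus F^{*}$. Since $K'$ is algebraic over $K$, the element $a$ is algebraic over $F^{*}$, and hence has a minimal polynomial $f\in\Irr F^{*}[X]$. The embedding $\psi^{*}$ induces in the obvious way an isomorphism $\varPsi\colon F^{*}[X]\to\psi^{*}(F^{*})[X]$, and since $L$ is algebraically closed, the image polynomial $\varPsi(f)$ has a root $b\in L$. Applying Lemma \ref{ll2} with $K_1=F^{*}$, $K_2=\psi^{*}(F^{*})$, $L_1=K'$, $L_2=L$, $a_1=a$, $a_2=b$ yields an embedding $\psi^{**}\colon F^{*}(a)\to L$ extending $\psi^{*}$. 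Then $(F^{*}(a),\psi^{**})\in\mathcal{S}$ strictly dominates $(F^{*},\psi^{*})$, contradicting maximality. Hence $F^{*}=K'$ and $\varPsi:=\psi^{*}$ is the desired embedding.

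The only mildly delicate step is the verification that the union of a chain of embeddings is itself an embedding into $L$; the real work has already been done in Lemma \ref{ll2}, so the argument is essentially a chain-of-extensions application of the axiom of choice.
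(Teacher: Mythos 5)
Your proof is correct and is the standard Zorn's lemma argument; the paper itself gives no proof, merely citing Browkin's \emph{Field Theory} (Lemma 4, p.~109), where essentially this same chain-of-extensions argument appears. The one cosmetic point is that you use $\varPsi$ both for the induced isomorphism $F^{*}[X]\to\psi^{*}(F^{*})[X]$ and for the final embedding $\psi^{*}$; pick distinct symbols to avoid confusion.
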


\begin{proof}
	\cite{TC}, Lemma 4, p. 109.
\end{proof}

\begin{lm}
	\label{ll3}
	If $L$ be a finite field extension of $K$, then $L$ is a Galois extension of $K$ if and only if $|G(L|K)|=(L\colon K)$.
\end{lm}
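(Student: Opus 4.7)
The plan is to prove the two implications separately, combining the primitive element theorem with Lemma \ref{ll2} for one direction and Artin's fixed-field theorem for the other.

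For the forward implication, assume $L/K$ is finite and Galois. Since it is separable, the primitive element theorem yields $\alpha \in L$ with $L = K(\alpha)$; let $f \in K[X]$ be its minimal polynomial, so $\deg f = (L\colon K) =: n$. Separability ensures that $f$ has $n$ pairwise distinct roots $\alpha_1 = \alpha, \alpha_2, \ldots, \alpha_n$ in $a(K)$, and normality forces every $\alpha_i$ to lie in $L$. Every $\sigma \in G(L|K)$ is determined by $\sigma(\alpha)$, and $\sigma(\alpha)$ must be one of the $\alpha_i$. Conversely, applying Lemma \ref{ll2} with $\varphi = \mathrm{id}_K$, $\varPsi = \mathrm{id}_{K[X]}$, and the irreducible $f$ with its two roots $\alpha$ and $\alpha_i$ produces a $K$-isomorphism $K(\alpha) \to K(\alpha_i)$; since $K(\alpha) = L \supseteq K(\alpha_i)$, comparison of dimensions over $K$ forces $K(\alpha_i) = L$, so this isomorphism is an element of $G(L|K)$. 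Hence $|G(L|K)|$ equals the number of roots of $f$ in $L$, which is $n$.

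For the converse, suppose $|G(L|K)| = (L\colon K) =: n$. Set $H := G(L|K)$ and let $F := L^H$ be its fixed field, so that $K \subseteq F \subseteq L$. By the Artin fixed-field theorem (which rests on Dedekind's linear independence of characters), for any finite group $H$ of automorphisms of a field $L$ one has $(L\colon L^H) = |H|$, and $L/L^H$ is Galois with group $H$. Applied to our $H$, this gives $(L\colon F) = n = (L\colon K)$, and the tower law $(L\colon K) = (L\colon F)(F\colon K)$ then forces $(F\colon K) = 1$, so $F = K$. Consequently $L/K$ is Galois.

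The main obstacle is the converse, where a purely numerical hypothesis must be converted into the qualitative properties of normality and separability. The cleanest way to package this is Artin's fixed-field theorem invoked above; a more granular alternative would exploit the inequality chain $|G(L|K)| \leq [L\colon K]_s \leq (L\colon K)$, in which equality at the first step corresponds to normality and at the second to separability, so that overall equality recovers both conditions at once.
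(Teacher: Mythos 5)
Your proof is correct. Note that the paper itself does not prove this lemma at all: its ``proof'' is a citation to Browkin's \emph{Field theory} (Corollary 1, p.~126), so any self-contained argument is already doing more than the source. Your forward direction is the standard one and meshes well with the paper's toolkit: the primitive element theorem gives $L=K(\alpha)$, separability and normality give $n=(L\colon K)$ distinct roots of the minimal polynomial all lying in $L$, and Lemma \ref{ll2} (with $\varphi=\mathrm{id}_K$) supplies, for each root $\alpha_i$, a $K$-isomorphism $K(\alpha)\to K(\alpha_i)=L$, so the map $\sigma\mapsto\sigma(\alpha)$ is a bijection from $G(L|K)$ onto the root set. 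The one hypothesis you rely on without proof in the converse is Artin's fixed-field theorem, $(L\colon L^H)=|H|$ with $L/L^H$ Galois; that is an external input of comparable weight to the lemma itself, but it is standard, correctly applied (the finiteness of $H=G(L|K)$ follows from the hypothesis $|G(L|K)|=(L\colon K)<\infty$), and the tower-law step $(F\colon K)=1$ cleanly finishes the argument. Your closing remark about the chain $|G(L|K)|\leq [L\colon K]_s\leq (L\colon K)$ is the other standard route and would avoid invoking Artin, at the cost of developing the separable-degree formalism. Either way, the argument is sound.
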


\begin{proof}
	\cite{TC}, Corollary 1, p. 126.
\end{proof}

\section{Characterization of field extensions in terms of polynomial composites}

Let's start with an auxiliary lemma that will help prove the Theorem \ref{01}.

\begin{lm}
	\label{l1}
	If there exists a nonzero ideal $I$ of $L[X]$, where $L$ is a field, that is finitely generated as an $K+XL[X]$-module, then $K$ is a field and $[L\colon K]<\infty$.
\end{lm}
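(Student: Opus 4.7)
The plan is to exploit that $L[X]$ is a principal ideal domain to reduce the hypothesis to a single-generator statement, then extract a finite $K$-spanning set of $L$ by evaluating at $X=0$, and finally use an integrality argument to upgrade "$K$ is a subring" to "$K$ is a field".

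First I would observe that since $L$ is a field, $L[X]$ is a PID, so the nonzero ideal $I$ has the form $I=fL[X]$ for some nonzero $f\in L[X]$. Let $h_1,\dots,h_n\in I$ be generators of $I$ as a $K+XL[X]$-module; write each $h_i=fp_i$ with $p_i\in L[X]$. For an arbitrary $a\in L$, the element $af$ lies in $I$, so there exist $k_i\in K$ and $g_i\in L[X]$ with
\[
af=\sum_{i=1}^n(k_i+Xg_i)\,f\,p_i.
\]
Because $L[X]$ is an integral domain and $f\neq 0$, I can cancel $f$ to get $a=\sum_i(k_i+Xg_i)p_i$ in $L[X]$. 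Substituting $X=0$ yields $a=\sum_i k_i\,p_i(0)$, so every $a\in L$ is a $K$-linear combination of the fixed elements $p_1(0),\dots,p_n(0)\in L$. Hence $L=Kp_1(0)+\cdots+Kp_n(0)$ is a finitely generated $K$-module.

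Next I would deduce that $K$ is a field. Since $L$ is finitely generated as a $K$-module, every element of $L$ is integral over $K$. Take any nonzero $k\in K$; because $L$ is a field, $k^{-1}\in L$, so there is a monic relation
\[
k^{-m}+a_{m-1}k^{-(m-1)}+\cdots+a_1 k^{-1}+a_0=0
\]
with $a_j\in K$. Multiplying by $k^{m-1}$ gives $k^{-1}=-(a_{m-1}+a_{m-2}k+\cdots+a_0k^{m-1})\in K$, so $K$ is a field. Once $K$ is known to be a field, the relation $L=\sum_{i=1}^n Kp_i(0)$ becomes a statement that $L$ is a $K$-vector space spanned by $n$ elements, giving $[L\colon K]\le n<\infty$.

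The main obstacle I foresee is the step from "finitely generated $K$-module" to "$K$ is a field"; it is purely formal once one invokes integrality, but it has to be done carefully because one cannot a priori use "field extension" language when $K$ has not yet been shown to be a field. The rest of the argument is a direct manipulation in the PID $L[X]$ combined with evaluation at $X=0$, which is the natural map that turns the module hypothesis over $K+XL[X]$ into a statement over $K$.
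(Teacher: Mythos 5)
Your proof is correct, but it takes a genuinely different route from the paper's. The paper does not use that $L[X]$ is a PID: it first shows $XL[X]I\neq I$ by localizing at $XL[X]$ and invoking Nakayama's lemma, then passes to the quotient $I/XL[X]I$, which is a nonzero vector space over $L[X]/XL[X]=L$ that is finitely generated over $(K+XL[X])/XL[X]=K$; since such a space is a direct sum of copies of $L$, one concludes that $L$ is a finitely generated $K$-module. You reach the same intermediate conclusion concretely: writing $I=fL[X]$, expressing $af$ in terms of the module generators $fp_i$, cancelling $f$ in the domain $L[X]$, and evaluating at $X=0$ to get $L=\sum_i Kp_i(0)$. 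Your version is more elementary (no Nakayama, no localization) and even produces an explicit spanning set, hence the bound $[L\colon K]\le n$; the paper's version is the standard argument in the $D+M$ literature and would survive in settings where the ambient ring is not a PID. Both proofs finish identically, via integrality of $L$ over $K$ to show $K$ is a field, though you spell out the monic-relation manipulation that the paper leaves implicit, which is a worthwhile precaution since one cannot speak of a ``field extension'' before that step is done.
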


\begin{proof}
	Clearly, $I$ is finitely generated over $L[X]$, and hence $XL[X]I\neq I$. For otherwise, $XL[X]L[X]_{XL[X]}\cdot IL[X]_{XL[X]}=IL[X]_{XL[X]}$ and therefore $IL[X]_{XL[X]}=0$, by Nakayama's lemma. This is impossible, since $0\neq I\subseteq IL[X]_{XL[X]}$. It follows that $I/XL[X]I$ is a nonzero ($L[X]/XL[X]=L$)-module that is finitely generated as an ($K+XL[X]/XL[X]=K$)-module. Since $L$ is a field, $I/XL[X]I$ can be written as a direct sum of copies of $L$. Thus, $L$ is a finitely generated $K$-module. But then $K$ is a field, since the field $L$ is integral over $K$ and obviously $[L\colon K]<\infty$.
\end{proof}

All my considerations began with the Proposition \ref{01} below. This Proposition motivated me to further consider polynomial composites $K+XL[X]$ in a situation where the extension of fields $K\subset L$ is algebraic, separable, normal and Galois, respectively.

\begin{pr}
	\label{01}
	Let $K\subset L$ be a field extension. Put $T=K+XL[X]$. Then
	$T$ is Noetherian if and only if $[L\colon K]<\infty$.
\end{pr}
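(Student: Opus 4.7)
The plan is to handle the two implications separately, with Lemma~\ref{l1} doing the heavy lifting on one side and the Eakin--Nagata theorem on the other.

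For the forward direction, I would argue as follows. Assume $T$ is Noetherian. Observe that $XL[X]$ is simultaneously an ideal of $L[X]$ and an ideal of $T$ (since $K+XL[X]$ multiplies $XL[X]$ into itself). Because $T$ is Noetherian, $XL[X]$ is finitely generated as an ideal of $T$, and therefore finitely generated as a $T$-module. So $XL[X]$ is a nonzero ideal of $L[X]$ that is finitely generated as a $(K+XL[X])$-module, and Lemma~\ref{l1} immediately gives $[L\colon K]<\infty$.

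For the backward direction, suppose $[L\colon K]=n<\infty$ and pick a $K$-basis $e_1,\dots,e_n$ of $L$. The key step is to show that $L[X]$ is a finitely generated $T$-module. Given $f=a_0+\sum_{i\geq 1}a_iX^i\in L[X]$ with $a_i\in L$, write each coefficient as $a_i=\sum_j k_{ij}e_j$ with $k_{ij}\in K$. Then
\[
f=\sum_j k_{0j}e_j+\sum_j e_j\Bigl(\sum_{i\geq 1}k_{ij}X^i\Bigr),
\]
where $k_{0j}\in K\subseteq T$ and $\sum_{i\geq 1}k_{ij}X^i\in XK[X]\subseteq XL[X]\subseteq T$. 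Thus $L[X]=\sum_{j=1}^n e_j\,T$, so $L[X]$ is module-finite over $T$.

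Since $L[X]$ is a principal ideal domain, hence Noetherian, and $L[X]$ is a finitely generated $T$-module, the Eakin--Nagata theorem yields that the subring $T$ is itself Noetherian. The only place where there is any subtlety at all is invoking Eakin--Nagata: going down from a Noetherian overring to a subring requires that overring to be finite as a module, which is exactly what the explicit $K$-basis argument provides. Everything else is bookkeeping with the decomposition $T=K+XL[X]$ and a direct application of Lemma~\ref{l1}.
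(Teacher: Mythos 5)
Your proposal is correct and follows essentially the same route as the paper: the forward direction applies Lemma~\ref{l1} to the finitely generated ideal $XL[X]$, and the backward direction invokes Eakin's theorem after noting that $L[X]$ is module-finite over $T$. The only difference is that you spell out the explicit $K$-basis computation showing $L[X]=\sum_j e_jT$, which the paper asserts without proof; that is a welcome but not structurally different addition.
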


\begin{proof}
$\Rightarrow$ 
Since $XL[X]$ is a finitely generated ideal of $K+XL[X]$, it follows from Lemma \ref{l1} that $[L\colon K]<\infty$. Thus, $L[X]$ is module-finite over the Noetherian ring $K+XL[X]$.

\medskip

$\Leftarrow$
$L[X]$ is Noetherian ring and module-finite over the subring $K+XL[X]$. This is the situation covered by P.M. Eakin's Theorem \cite{zzz}.
\end{proof}

\begin{pr}
	\label{02}
	Let $K\subset L$ be a fields extension such that $L^{G(L\mid K)}=K$. Put $T=K+XL[X]$. 
	$T$ is Noetherian if and only if $K\subset L$ be an algebraic extension.
\end{pr}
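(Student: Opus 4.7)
The plan is to reduce both implications to Proposition \ref{01}, which already equates $T$ being Noetherian with the numerical condition $[L\colon K]<\infty$. So in effect the task becomes: under the extra hypothesis $L^{G(L\mid K)}=K$, show that $L/K$ being algebraic is equivalent to $L/K$ being finite.

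The forward direction I would dispose of immediately: if $T$ is Noetherian, then Proposition \ref{01} gives $[L\colon K]<\infty$, and any finite extension is of course algebraic. This direction does not in fact use the hypothesis on the fixed field; that hypothesis is reserved for the converse.

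For the converse I would argue as follows. Assume $L/K$ is algebraic and $L^{G(L\mid K)}=K$. The first observation is that the two assumptions together make $L/K$ a Galois extension in the sense defined in the paper, because algebraicity plus coincidence of $K$ with the fixed field of $\mathrm{Aut}_K L$ is the standard criterion. I would then try to upgrade this qualitative statement to the quantitative bound $[L\colon K]<\infty$, which by Proposition \ref{01} is all that is needed to conclude $T$ Noetherian. The natural route is an Artin-type identification $[L\colon K]=|G(L\mid K)|$, in the spirit of Lemma \ref{ll3}: one takes an arbitrary $\alpha\in L$, notes that its minimal polynomial over $K$ has as roots precisely the $G(L\mid K)$-orbit of $\alpha$ in $L$, and tries to bound $[L\colon K]$ by controlling these orbits.

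The main obstacle, and the step I would treat most carefully, is exactly the passage from algebraic-plus-Galois to finite. Nothing in the hypotheses directly forces $G(L\mid K)$ to be a finite group, so a naive invocation of Artin is not available; some further structural input—either an additional reduction step or a use of the specific ring-theoretic shape of $T=K+XL[X]$ via Lemma \ref{l1}—will be needed to exclude the infinite Galois case and finally yield $[L\colon K]<\infty$. Once that degree bound is secured, Proposition \ref{01} closes the argument and gives that $T$ is Noetherian.
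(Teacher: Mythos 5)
Your forward direction coincides with the paper's: Proposition \ref{01} gives $[L\colon K]<\infty$, hence algebraicity, and you are right that the fixed-field hypothesis is not needed there. The converse is where your proposal stops short of a proof: you correctly reduce the task to showing that an algebraic extension with $L^{G(L\mid K)}=K$ must be finite, you correctly observe that nothing in the hypotheses bounds $|G(L\mid K)|$ or the orbits of an element under it, and you then leave the step open, hoping for ``further structural input.'' As written, that is a genuine gap: the implication (algebraic $\Rightarrow$ Noetherian) is simply not established by your argument.

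Your hesitation, however, is the mathematically sound reaction, and you should not try to close the gap. The paper's own proof of this direction closes it by asserting that ``every normal extension is finite,'' which is false, and in fact the converse implication itself fails as stated: take $K=\mathbb{Q}$ and $L=a(\mathbb{Q})$. Then $K\subset L$ is algebraic, separable and normal, and $L^{G(L\mid K)}=K$ since the fixed field of the absolute Galois group is $\mathbb{Q}$; yet $[L\colon K]=\infty$, so by Proposition \ref{01} the ring $T=K+XL[X]$ is not Noetherian. So the step you flagged as the main obstacle is not merely hard but impossible: the hypothesis $L^{G(L\mid K)}=K$ upgrades ``algebraic'' to ``Galois'' but carries no finiteness information, and Proposition \ref{02} would need an additional finiteness assumption (for instance $|G(L\mid K)|<\infty$) for the converse direction to hold.
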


\begin{proof}
($\Rightarrow$) 
Since $T=K+XL[X]$ is Noetherian, where $K\subset L$ be fields extension, then by Proposition \ref{01} we get that $K\subset L$ is a finite extension. And every finite extension is algebraic. 

\medskip

($\Leftrightarrow$) 
Assume that $K\subset L$ be an algebraic extension. Assuming $L^{G(L\mid K)}=K$ we get directly from the definition of the Galois extension. Since $K\subset L$ be the Galois extension, then $K\subset L$ be a normal extension. Every normal extension is finite, then by Proposition \ref{01} we get that $K+XL[X]$ be a Noetherian.
\end{proof}

\begin{pr}
	\label{04}
	Let $K\subset L$ be fields extension such that $K$ be a perfect field and assume that any $K$-isomorphism $\varphi\colon M\to M$, where $\varphi(L)=L$ holds for every field $M$ such that $L\subset M$.
	 Put $T=K+XL[X]$. 
	$T$ be a Noetherian if and only if $K\subset L$ be a separable extension.
\end{pr}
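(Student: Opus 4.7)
The plan is to parallel the structure of the proof of Proposition \ref{02}, using Proposition \ref{01} as the decisive bridge between the Noetherian property of $T=K+XL[X]$ and the finiteness of $[L\colon K]$. The two hypotheses (perfectness of $K$ and the automorphism-invariance of $L$) are designed to deliver separability and normality respectively, and once Galoisness is in hand I would close the loop through degree finiteness.

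For the direction ($\Rightarrow$), I would apply Proposition \ref{01} directly to conclude $[L\colon K]<\infty$. Because $K$ is perfect, the definition recalled in the introduction guarantees that every finite extension of $K$ is separable, and therefore $K\subset L$ is a separable extension.

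For the direction ($\Leftarrow$), I would first note that separability forces $K\subset L$ to be algebraic. The isomorphism hypothesis, specialised to $M=a(K)$ (an algebraic closure containing $L$), asserts that every $K$-automorphism of $a(K)$ maps $L$ into itself; this is the standard criterion identifying $K\subset L$ as normal. Combined with separability this makes $K\subset L$ a Galois extension. I would then pass to degree finiteness and invoke Proposition \ref{01} in the reverse direction to conclude that $T$ is Noetherian.

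The step demanding the most care is the passage from Galois to finite, since Galois extensions can in principle be infinite. Here one must use the full force of the hypothesis, which quantifies over \emph{all} overfields $M\supset L$ rather than only $a(K)$. The strategy I would pursue is to combine Lemma \ref{ll1} with Lemma \ref{ll2} to produce, for each algebraic element $a\in L$, a $K$-embedding of $K(a)$ into a sufficiently large $M$ and extend it to a $K$-automorphism of $M$; the invariance $\varphi(L)=L$ then confines the conjugates of $a$ to $L$, and Lemma \ref{ll3} relates the order of $G(L\mid K)$ to $[L\colon K]$, ultimately forcing $[L\colon K]<\infty$. This Galois-to-finite step is the real crux of the argument; everything else is a direct appeal to Proposition \ref{01} and the definitions of perfect and separable.
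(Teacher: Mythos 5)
Your ($\Rightarrow$) direction coincides with the paper's: Proposition \ref{01} gives $[L\colon K]<\infty$, hence algebraicity, and perfectness of $K$ then gives separability. The problem is the ($\Leftarrow$) direction, where you correctly identify the crux --- getting from ``Galois'' to ``finite'' --- but do not actually close it, and the strategy you sketch cannot close it. Lemma \ref{ll3} is stated only for extensions already known to be finite, so invoking it to \emph{derive} $[L\colon K]<\infty$ is circular; and the mechanism of extending embeddings via Lemmas \ref{ll1} and \ref{ll2} and confining conjugates to $L$ only yields normality of $K\subset L$, which says nothing about the degree. Indeed no argument can succeed here: take $K=\mathbb{Q}$ and $L=a(\mathbb{Q})$. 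Then $K$ is perfect, $K\subset L$ is separable, and every $K$-isomorphism of an overfield $M\supset L$ carries $L$ onto itself (it permutes the elements algebraic over $K$), yet $[L\colon K]=\infty$, so $T$ is not Noetherian by Proposition \ref{01}. The gap you flagged is therefore not a technical difficulty but an actual failure of the implication as stated.

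For comparison, the paper's own ($\Leftarrow$) argument takes a different route to the same dead end: it embeds $L$ in the smallest normal extension $M$ of $K$ containing $L$, shows $M$ is Galois over $K$, uses the invariance hypothesis to conclude $L$ is normal over $K$, and then asserts that ``every normal extension is finite'' before applying Proposition \ref{01}. That assertion is false under the paper's own definition of normal extension (definition (d) of the introduction requires no finiteness), so the paper's proof has exactly the gap you ran into. Your decomposition (separable $\Rightarrow$ normal via the hypothesis at $M=a(K)$ $\Rightarrow$ Galois $\Rightarrow$ finite) is cleaner and more honest about where the difficulty sits, but neither your proposal nor the paper supplies the missing finiteness step, and the counterexample above shows it cannot be supplied without strengthening the hypotheses.
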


\begin{proof}
	($\Rightarrow$) 
	By Proposition \ref{01} $K\subset L$ be a finite extension. Every finite extension be an algebraic extension. Since $K$ be the perfect field, then $K\subset L$ be a separable extension.
	
	\medskip
	
	($\Leftrightarrow$)
	First we show that if $L$ be a separable extension of the field $K$, then the smallest normal extension $M$ of the field $K$ containing $L$ be the Galois extension of the field $K$.
	
	\medskip
	
	If $L$ be a separable extension of the field $K$, and $N$ be a normal extension of the field $K$ containing $L$, then let $M$ be the largest separable extension of $K$ contained in $N$. So we have $L\subset M$ and therefore it suffices to prove that $M$ be the normal extension of $K$. 
	
	\medskip
	
	Let $g\in\Irr K[X]$ has a root $a$ in the field $M$. Because $N$ be the normal extension of $K$ and $a\in N$, so it follows that all roots of polynomial $G$ belong to the field $N$. The element $a$ is separable relative to $K$, and so belong to $M$. Hence polynomial $g$ is the product of linear polynomials belonging to $M[X]$, which proves that $M$ be the normal extension of the field $K$.
	
	\medskip
	
	Since $M$ be the normal extension of $K$ and the Galois extension of $K$, then $L$ be the normal extension of $K$ by the assumption (\cite{TC}, exercise 4, p. 119).
	
	\medskip
	
	Because $L$ be the normal extension of $K$, then $L$ be the finite extension of $K$. And by Theorem \ref{01} we get that $K+XL[X]$ is Noetherian.
\end{proof}

\begin{pr}
	\label{06}
	Let $K\subset L$ be fields extension. Assume that if a map $\varphi\colon L\to a(K)$ is $K$-embedding, then $\varphi (L)=L$. 
	Put $T=K+XL[X]$. 
	$T$ be a Noetherian if and only if $K\subset L$ be a normal extension.
\end{pr}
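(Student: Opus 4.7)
The plan is to follow the template already set by Propositions \ref{02} and \ref{04}, using Proposition \ref{01} as the bridge between the Noetherian property of $T=K+XL[X]$ and the finiteness of $[L\colon K]$. The real content of the proposition lies in showing that the hypothesis on $K$-embeddings is the precise missing ingredient converting finiteness (which is what Proposition \ref{01} supplies) into normality, and vice versa.

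For the direction $(\Rightarrow)$, I would first invoke Proposition \ref{01} to conclude $[L\colon K]<\infty$, so $L/K$ is algebraic. The goal is then to verify normality by means of the classical criterion: an algebraic extension is normal if and only if every $K$-embedding $\varphi\colon L\to a(K)$ satisfies $\varphi(L)=L$. Lemma \ref{ll1} supplies enough $K$-embeddings of $L$ into $a(K)$ by extending the identity on $K$, and Lemma \ref{ll2} shows that for each root $a'\in a(K)$ of the minimal polynomial of any $a\in L$, there exists a $K$-embedding sending $a$ to $a'$. The hypothesis $\varphi(L)=L$ then forces $a'\in L$, so every irreducible polynomial over $K$ with a root in $L$ splits in $L$, which is exactly normality.

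For the direction $(\Leftarrow)$, assume that $K\subset L$ is normal. By the definition of a normal extension (item (d) in the introduction) this extension is algebraic. To finish by Proposition \ref{01} it remains to conclude that $[L\colon K]<\infty$ and thereby that $T$ is Noetherian. This is the step I expect to be the main obstacle, because normality does not in general imply finiteness (for instance $\overline{\mathbb{Q}}/\mathbb{Q}$ is a counterexample). The paper consistently closes this gap by the slogan \emph{every normal extension is finite}, already used in the proof of Proposition \ref{02}, so to stay consistent with the style of the paper I would invoke the same reasoning, combine it with Proposition \ref{01}, and conclude that $K+XL[X]$ is Noetherian.

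The only genuinely delicate point is therefore the backward direction's finiteness step; any rigorous version of this argument would either need to restrict the proposition to finitely generated extensions or exploit the embedding hypothesis to exclude infinite normal extensions directly, for example by combining the hypothesis with Lemma \ref{ll1} to show that an infinite normal tower would admit nontrivial automorphisms violating $\varphi(L)=L$. In the forward direction, the translation between $K$-embeddings and roots via Lemmas \ref{ll1}--\ref{ll2} is standard and routine.
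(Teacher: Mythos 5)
Your forward direction is essentially the paper's own argument: get $[L\colon K]<\infty$ (hence algebraicity) from Proposition \ref{01}, then use Lemma \ref{ll2} to produce, for a root $c\in L$ of $g\in\Irr K[X]$ and an arbitrary root $b\in a(K)$, a $K$-isomorphism $K(c)\to K(b)$, extend it by Lemma \ref{ll1} to a $K$-embedding $\varphi\colon L\to a(K)$, and let the hypothesis $\varphi(L)=L$ force $b\in L$, so that $g$ splits in $L[X]$. This matches the paper step for step; if anything your version is cleaner, since the paper obtains algebraicity by citing Proposition \ref{02}, whose hypothesis $L^{G(L\mid K)}=K$ is not assumed here, whereas your appeal to Proposition \ref{01} is legitimate.

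The backward direction is where you correctly located the problem, and the gap cannot be filled: the implication is false as stated. The paper closes it with the assertion that every normal extension is finite, which is wrong, and your proposed rescue --- using the embedding hypothesis to exclude infinite normal extensions --- also fails. Take $K=\mathbb{Q}$ and $L=a(\mathbb{Q})=\overline{\mathbb{Q}}$. Any $K$-embedding $\varphi\colon\overline{\mathbb{Q}}\to\overline{\mathbb{Q}}$ has image an algebraically closed subfield over which $\overline{\mathbb{Q}}$ is algebraic, hence $\varphi(L)=L$ and the hypothesis of the proposition holds; the extension is normal; yet $[L\colon K]=\infty$, so by Proposition \ref{01} the ring $\mathbb{Q}+X\overline{\mathbb{Q}}[X]$ is not Noetherian. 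No argument can therefore derive Noetherianity from normality under the stated hypotheses; the $(\Leftarrow)$ direction needs the additional assumption that the extension is finite (after which it is just Proposition \ref{01} and the normality hypothesis becomes irrelevant). Your instinct to restrict the proposition rather than trust the slogan ``every normal extension is finite'' is the right one.
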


\begin{proof}
	($\Rightarrow$) 
	By Proposition \ref{02} $K\subset L$ be the algebraic extension.
	
	\medskip
	
	Let $c$ be a root of polynomial $g$ belonging to $L$, and $b$ be the arbitrary root of $g$ belonging to $a(K)$. Because polynomial $g\in\Irr K[X]$, so by Corollary from Lemma \ref{ll2} there exists $K$-isomorphism $\varphi'\colon K(c)\to K(d)$. By Lemma \ref{ll1} it can be extended to embedding $\varphi\colon L\to a(K)$. Hence towards $\varphi (L)=L$ and $\varphi (K(c))=\varphi'(K(c))=K(d)$ we get that $K(d)\subset L$, so $b\in L$. Hence every root of polynomial $g$ belong to $L$, so polynomial $g$ is the product of linear polynomials belonging to $L[X]$ ($\ast$). 
	
	\medskip
	
	For every $c\in L$ let $g_c\in\Irr K[X]$ satisfying $g_c(c)=0$. By ($\ast$) every roots of $g_c$ belong to the field $L$. Hence $L$ is a composition of splitting field of all polynomials $g_c$, where $c\in L$. Hence $L$ be the normal extension of $K$.
	
	\medskip
	
	($\Leftrightarrow$)
	 If $L$ be a normal field extension of the field $K$, then $K\subset L$ be the finite extension. Then by Proposition \ref{01} we get that $K+XL[X]$ be Noetherian.
\end{proof}

\begin{pr}
	\label{07}
	Let $K\subset L$ be fields extension such that $L^{G(L\mid K)}=K$. Put $T=K+XL[X]$. 	
	$T$ be a Noetherian if and only if $K\subset L$ be a normal extension.
\end{pr}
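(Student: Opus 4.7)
The plan is to follow the template set by Propositions \ref{02}, \ref{04}, and \ref{06}: establish $[L:K]<\infty$ in each direction and then invoke Proposition \ref{01}. The hypothesis $L^{G(L\mid K)}=K$ is exactly the ``Galois'' condition (modulo finiteness), so the task reduces to linking it with normality and passing to a finite extension.

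For the forward implication, I would argue as follows. Noetherianness of $T$ together with Proposition \ref{01} immediately yields $[L:K]<\infty$, so $L/K$ is algebraic and $G(L\mid K)$ is a finite group of automorphisms of $L$. Since its fixed field is $K$ by hypothesis, Artin's theorem gives $[L:K]=|G(L\mid K)|$, and Lemma \ref{ll3} then shows $L/K$ is Galois. In particular, $L/K$ is normal.

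For the reverse implication, I would first convert ``normal'' into ``Galois'' using the hypothesis on the fixed field: in a normal algebraic extension, $L^{G(L\mid K)}$ equals the purely inseparable closure of $K$ in $L$, and the hypothesis forces this closure to be $K$ itself, so $L/K$ is separable and hence Galois. With $L/K$ Galois, I would then conclude $[L:K]<\infty$ exactly as in the closing steps of Propositions \ref{02}, \ref{04}, and \ref{06}, and finally appeal to Proposition \ref{01} to obtain that $T$ is Noetherian.

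The main obstacle is the final passage in $(\Leftarrow)$, namely deducing $[L:K]<\infty$ from ``normal extension with $L^{G(L\mid K)}=K$''. In full generality this is false (algebraic closures are normal and usually infinite), so the proof has to rely on precisely the same informal step that appears in the earlier propositions of the paper: under the running hypothesis $L^{G(L\mid K)}=K$, the author treats a normal/Galois extension as finite. Making this rigorous — or otherwise extracting finiteness directly from the given data — is the delicate point on which the argument hinges.
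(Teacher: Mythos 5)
Your forward direction takes the same route as the paper but is actually more careful: the paper merely asserts that $L^{G(L\mid K)}=K$ yields a Galois extension ``directly from the definition,'' whereas you first extract $[L\colon K]<\infty$ from Proposition \ref{01}, then invoke Artin's theorem to get $|G(L\mid K)|=[L\colon K]$ and Lemma \ref{ll3} to conclude Galois, hence normal. That is a complete and correct argument for $(\Rightarrow)$, and it is the right way to make the paper's sketch rigorous.

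The obstacle you flag in $(\Leftarrow)$ is a genuine gap, and it cannot be closed: the reverse implication is false as stated. Take $K=\mathbb{Q}$ and $L=a(\mathbb{Q})$. This extension is normal (indeed Galois) and satisfies $L^{G(L\mid K)}=K$, yet $[L\colon K]=\infty$, so by Proposition \ref{01} the composite $K+XL[X]$ is not Noetherian. The paper offers no way around this: its own proof of $(\Leftarrow)$ literally cites ``Proposition \ref{07} ($\Leftrightarrow$)'', i.e.\ itself, and the step it implicitly borrows from Proposition \ref{06} --- that every normal extension is finite --- is precisely the false claim you identified. So your write-up correctly isolates the one step that cannot be repaired without adding a finiteness (or finite-index) hypothesis to the statement.
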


\begin{proof}
	($\Rightarrow$)
	By Proposition \ref{02} we get that $K\subset L$ is the algebraic field extension. Assuming $L^{G(L\mid K)}=K$ we get directly from the definition of the Galois extension, and so normal extension. 
	
	\medskip
	
	($\Leftrightarrow$)
	Proposition \ref{07} ($\Leftrightarrow$).
\end{proof}

\begin{pr}
	\label{09}
	Let $T=K+XL[X]$ be Noetherian, where $K\subset L$ be fields. Assume
	$|G(L\mid K)|=[L\colon K]$ and any $K$-isomorphism $\varphi\colon M\to M$, where $\varphi(L)=L$ holds for every field $M$ such that $L\subset M$.
	$T$ be a Noetherian if and only if $K\subset L$ be a Galois extension. 
\end{pr}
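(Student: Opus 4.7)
The plan is to handle both implications by invoking Proposition \ref{01} together with Lemma \ref{ll3}; the two standing hypotheses encode exactly what is needed to bridge these tools.

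For the forward direction $(\Rightarrow)$, assume $T = K + XL[X]$ is Noetherian. By Proposition \ref{01} we obtain $[L\colon K] < \infty$, so $K\subset L$ is a finite extension. The hypothesis $|G(L\mid K)| = [L\colon K]$ together with Lemma \ref{ll3} then delivers exactly the conclusion that $K \subset L$ is Galois.

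For the reverse direction $(\Leftarrow)$, assume $K \subset L$ is Galois. Then $L/K$ is in particular normal. Following the convention of the preceding propositions (see the closing lines of Propositions \ref{04}, \ref{06} and \ref{07}, where a normal extension is treated as finite), this forces $[L\colon K] < \infty$, and a second application of Proposition \ref{01} yields that $T$ is Noetherian. The $K$-isomorphism hypothesis is not actively invoked here, since normality is handed to us directly by the Galois hypothesis, rather than having to be extracted from separability as in the backward direction of Proposition \ref{04}; it is included only to keep the statement parallel with the earlier results.

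The main obstacle I foresee is essentially bookkeeping with the earlier propositions: one must check that the two standing hypotheses are jointly consistent with an arbitrary Galois extension (they are, automatically, in the finite case) and that Lemma \ref{ll3} applies cleanly as soon as Proposition \ref{01} has supplied finiteness. The one genuinely substantive step is the appeal to Lemma \ref{ll3} in the forward direction, without which the passage from ``finite extension'' to ``Galois extension'' could not be made.
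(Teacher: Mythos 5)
Your forward direction coincides exactly with the paper's: Proposition \ref{01} supplies $[L\colon K]<\infty$, and then the hypothesis $|G(L\mid K)|=[L\colon K]$ feeds into Lemma \ref{ll3} to conclude that the extension is Galois. In the backward direction you diverge from the paper. The paper argues ``Galois $\Rightarrow$ separable'' and then cites Proposition \ref{04} to get Noetherianity; you instead argue ``Galois $\Rightarrow$ normal $\Rightarrow$ finite'' and then cite Proposition \ref{01}. Your route has one advantage: Proposition \ref{04} formally requires $K$ to be perfect, a hypothesis absent from the present statement, so the paper's citation is strictly speaking not licensed, whereas your appeal to Proposition \ref{01} needs only finiteness. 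On the other hand, the step you lean on explicitly --- that a normal extension is finite --- is false in general (the algebraic closure $a(\mathbb{Q})$ is a normal but infinite extension of $\mathbb{Q}$); you are right that the paper repeatedly asserts this in the closing lines of Propositions \ref{04}, \ref{06} and \ref{07}, so your proof is no weaker than the paper's, but you should not treat it as a theorem. If you want a backward direction that is actually airtight, note that for a Galois extension the hypothesis $|G(L\mid K)|=[L\colon K]$ itself forces finiteness (an infinite Galois group has cardinality at least $2^{\aleph_0}$ while the degree of a countably generated algebraic extension is $\aleph_0$, so equality fails in the infinite case), after which Proposition \ref{01} applies; this repairs both your argument and the paper's. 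Your observation that the $K$-isomorphism hypothesis is idle in the backward direction is accurate for your route, though it is the hypothesis Proposition \ref{04} would need if one followed the paper's route instead.
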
	

\begin{proof}
	($\Rightarrow$)
	By Proposition $\ref{01}$ we get $K\subset L$ be the finite extension. By the assumption we can use Lemma \ref{ll3} and we get that $K\subset L$ be Galois extension.
	
	($\Leftarrow$)
	If $K\subset L$ be Galois fields extension, then is separable. By Proposition \ref{04} we get that $K+XL[X]$ be Noetherian.
\end{proof}

\begin{pr}
	\label{10}
	Let $T=K+XL[X]$, where $K\subset L$ be fields such that $K=L^{G(L\mid K)}$. $T$ be a Noetherian if and only if $K\subset L$ be a Galois extension. 
\end{pr}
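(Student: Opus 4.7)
The plan is to recognize that Proposition \ref{10} is essentially a consequence of Proposition \ref{07} (same hypothesis $L^{G(L\mid K)}=K$) combined with the standard characterization of Galois extensions among finite ones provided by Lemma \ref{ll3}. The backward direction should be immediate from Proposition \ref{07}, since a Galois extension is in particular normal, whereas the forward direction requires upgrading the Noetherian conclusion all the way to Galois, not merely to normal.

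For ($\Leftarrow$): if $K\subset L$ is Galois then it is, in particular, a normal extension, and the hypothesis $L^{G(L\mid K)}=K$ is exactly the one required by Proposition \ref{07}. Applying Proposition \ref{07} yields immediately that $T=K+XL[X]$ is Noetherian.

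For ($\Rightarrow$): assume $T$ is Noetherian. Proposition \ref{01} gives $[L\colon K]<\infty$, so $K\subset L$ is finite (hence algebraic), and the group $G=G(L\mid K)$ is finite (a $K$-automorphism of $L$ is determined by its values on a finite $K$-basis, and each basis element has only finitely many images, namely the roots of its minimal polynomial). The hypothesis $K=L^G$ combined with Artin's theorem on finite groups of automorphisms (which gives $[L\colon L^G]=|G|$) then yields $|G(L\mid K)|=[L\colon K]$. By Lemma \ref{ll3} this is precisely the criterion for $K\subset L$ to be a Galois extension.

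The main obstacle is the forward direction, and within it the only delicate point is the invocation of Artin's theorem to convert the fixed-field hypothesis into the numerical equality demanded by Lemma \ref{ll3}. Once finiteness of $G(L\mid K)$ is in place, which follows from Proposition \ref{01}, Artin supplies the rest and the proposition drops out.
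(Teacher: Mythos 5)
Your proof is correct, and the backward direction coincides with the paper's (Galois $\Rightarrow$ normal, then Proposition \ref{07} under the hypothesis $L^{G(L\mid K)}=K$). The forward direction, however, takes a genuinely different route. The paper first applies Proposition \ref{02} to conclude that $K\subset L$ is algebraic and then asserts that an algebraic extension with $L^{G(L\mid K)}=K$ is Galois ``directly from the definition'' --- which is really an appeal to the fixed-field characterization of Galois extensions, a nontrivial theorem rather than the definition (separable and normal) given in the paper's introduction. You instead extract $[L\colon K]<\infty$ from Proposition \ref{01}, observe that $G(L\mid K)$ is then finite, invoke Artin's theorem to turn $K=L^{G(L\mid K)}$ into the numerical equality $|G(L\mid K)|=[L\colon K]$, and close with Lemma \ref{ll3}. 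Your version is arguably the more rigorous of the two relative to the definitions actually stated in the paper, at the cost of importing Artin's theorem, which is standard but not among the paper's cited auxiliary lemmas; the paper's version is shorter but leans on an unproved equivalence of definitions.
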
	

\begin{proof}
	($\Rightarrow$) 
	By Proposition $\ref{02}$ we get that $K\subset L$ be the algebraic extension. Assuming $K=L^{G(L\mid K)}$ we get directly from the definition of the Galois extension.
	
	\medskip
	
	($\Leftarrow$)
	If $K\subset L$ be Galois fields extension. Then $K\subset L$ be a normal extension. Hence by Proposition \ref{07} we get that $K+XL[X]$ be Noetherian.
\end{proof}

\begin{pr}
	\label{13}
	Let $K\subset L\subset M$ be fields such that $K$ be a perfect field. If $K+XL[X]$ and $L+XM[X]$ be Noetherian then $K\subset M$ be separable fields extension.
	
	\medskip
	
	Moreover, if we assume that any $K$-isomorphism $\varphi\colon M'\to M'$, where $\varphi(M)=M$ holds for every field $M'$ such that $M\subset M'$, then $K+XM[X]$ be a Noetherian. 
\end{pr}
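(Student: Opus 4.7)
The plan is to chain Proposition \ref{01} (the finiteness criterion), the definition of a perfect field, and Proposition \ref{04} (the separable criterion), so that no new tools are needed beyond results already proved in this section.

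First I would apply Proposition \ref{01} twice. Noetherianness of $K+XL[X]$ gives $[L\colon K]<\infty$, and Noetherianness of $L+XM[X]$ gives $[M\colon L]<\infty$. The standard tower formula
\[
[M\colon K] = [M\colon L]\cdot [L\colon K]
\]
is then finite, so $K\subset M$ is a finite, hence algebraic, field extension. Since $K$ is perfect, the definition recalled in the introduction says that every finite extension of $K$ is separable; applied to $K\subset M$ this gives the first conclusion, namely that $K\subset M$ is separable.

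For the moreover part, I would feed the extension $K\subset M$ into Proposition \ref{04}, letting it play the role of ``$K\subset L$'' there (and renaming the generic overfield variable of Proposition \ref{04} from $M$ to $M'$ to avoid a collision with our $M$). The hypothesis that $K$ is perfect is given, and the extra isomorphism-stability assumption in the moreover part is precisely the hypothesis of Proposition \ref{04} with $L$ replaced by $M$ and $M$ replaced by $M'$. Combined with the separability of $K\subset M$ just established, Proposition \ref{04} immediately yields that $K+XM[X]$ is Noetherian.

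The argument is essentially bookkeeping once the earlier propositions are in hand, so I do not foresee any substantive obstacle. The only point that needs some care is the clash of notation between the running tower $K\subset L\subset M$ and the letter $M$ used in the statement of Proposition \ref{04} for a generic overfield; once that is resolved by a simple renaming, both assertions of the proposition follow at once.
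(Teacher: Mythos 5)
Your proof is correct, and for the first assertion it takes a genuinely different (and in fact safer) route than the paper. The paper applies Proposition \ref{04} to each link of the tower separately, concluding that $K\subset L$ and $L\subset M$ are both separable, and then invokes transitivity of separability. That application of Proposition \ref{04} to $L\subset M$ silently requires $L$ to be a perfect field, which is not among the hypotheses; it does follow (an algebraic extension of a perfect field is perfect), but the paper never says so. You instead apply Proposition \ref{01} twice to get $[L\colon K]<\infty$ and $[M\colon L]<\infty$, use the tower formula to get $[M\colon K]<\infty$, and then invoke perfectness of $K$ exactly once, at the top of the tower, via the definition recalled in the introduction. This bypasses the question of whether $L$ is perfect entirely and uses only the finiteness criterion plus the stated definition. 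For the \emph{moreover} part both you and the paper feed $K\subset M$ into the converse direction of Proposition \ref{04}; your explicit handling of the notational clash between the tower's $M$ and the generic overfield in Proposition \ref{04} is a worthwhile clarification, since the paper's one-line citation glosses over it.
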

	
\begin{proof}
	By Proposition \ref{04} we get $K\subset L$, $L\subset M$ be separable extensions. Then $K\subset M$ be separable extension. Moreover, from \ref{04} we get $K+XM[X]$ is Noetherian.
\end{proof}

\begin{pr}
	\label{14}
	Let $K\subset L\subset M$ be fields such that $M^{G(M\mid K)}=K$. If $K+XM[X]$ be Noetherian then $L\subset M$ be a normal fields extension. Moreover, $L+XM[X]$ be Noetherian.
\end{pr}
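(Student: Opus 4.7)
The plan is to split the conclusion into two independent parts: first establishing that $L+XM[X]$ is Noetherian, and then deducing the normality of $L\subset M$.

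For the Noetherian part, I would invoke Proposition~\ref{01} twice. Since $K+XM[X]$ is Noetherian, Proposition~\ref{01} gives $[M\colon K]<\infty$. By the tower law, $[M\colon L]\le[M\colon K]<\infty$, so $L\subset M$ is a finite extension, and reapplying Proposition~\ref{01} (this time in the direction $\Leftarrow$, to the pair $L\subset M$) yields that $L+XM[X]$ is Noetherian.

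For the normality part, the hypothesis $M^{G(M\mid K)}=K$ together with the finiteness (hence algebraicity) of $K\subset M$ just established gives, exactly as in Propositions~\ref{02}, \ref{07} and~\ref{10}, that $K\subset M$ is a Galois extension; in particular it is normal. It remains to transfer normality from the extension $K\subset M$ to its top part $L\subset M$. For this I would use the classical tower property: if $K\subset L\subset M$ and $K\subset M$ is normal, then $L\subset M$ is normal. This is immediate from the splitting field characterization of normality: writing $M$ as the splitting field over $K$ of a family $\mathcal{F}\subset K[X]$, the same polynomials viewed inside $L[X]$ exhibit $M$ as a splitting field over $L$, so $L\subset M$ is normal.

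The main obstacle is largely bookkeeping; all ingredients are either Proposition~\ref{01} or facts already echoed in the earlier Propositions of the paper. The only step not wholly internal is the tower-normality lemma, which is classical and can be cited from the same reference \cite{TC} that the paper uses throughout.
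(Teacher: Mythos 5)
Your proof is correct, and for the normality claim it follows essentially the same route as the paper: the paper invokes Proposition~\ref{07} (whose hypotheses $M^{G(M\mid K)}=K$ and $K+XM[X]$ Noetherian are exactly those given) to get that $K\subset M$ is normal, and then, just as you do, transfers normality to $L\subset M$ by the classical tower property. Where you genuinely diverge is the ``Moreover'' part: the paper disposes of it by citing Proposition~\ref{04}, which is a statement about \emph{separable} extensions over a \emph{perfect} base field and does not literally apply to the pair $L\subset M$ here, so that citation is at best a misprint. Your route --- Proposition~\ref{01} applied to $K+XM[X]$ to get $[M\colon K]<\infty$, the degree inequality $[M\colon L]\le [M\colon K]$, and Proposition~\ref{01} again in the converse direction for the pair $L\subset M$ --- is self-contained, uses only results actually proved in the paper, and is the cleaner justification. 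The only external ingredient you need, the tower-normality lemma, is classical and correctly stated, so there is no gap.
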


\begin{proof}
	By Proposition \ref{07} we have that $K\subset M$ be a normal extension. Then $L\subset M$ be the normal extension. Moreover, from \ref{04} we get that $L+XM[X]$ be Noetherian.  
\end{proof}

\begin{pr}
	\label{15}
	Let $K\subset L$ be extension fields such that $[L\colon K]=2$. Then $K+XL[X]$ be Noetherian. Moreover, if $L^{G(L\mid K)}=K$, then $K\subset L$ be a normal.
\end{pr}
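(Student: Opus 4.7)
The plan is to assemble the statement from earlier results, with a small classical observation as a back-check.

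For the first assertion, I would simply invoke Proposition~\ref{01}. The hypothesis $[L\colon K]=2$ gives $[L\colon K]<\infty$, and the reverse direction of Proposition~\ref{01} then yields that $T=K+XL[X]$ is Noetherian; nothing else is needed.

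For the \emph{moreover} part, the natural route is through Proposition~\ref{10}. We have already established that $T$ is Noetherian, and the new assumption $L^{G(L\mid K)}=K$ is exactly the standing hypothesis of Proposition~\ref{10}; its forward implication then delivers that $K\subset L$ is Galois, and a Galois extension is in particular normal. This closes the argument.

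The only point worth flagging, and the closest thing to an ``obstacle'', is that the hypothesis $L^{G(L\mid K)}=K$ is actually superfluous for the normality conclusion: any quadratic extension is automatically normal. Indeed, writing $L=K(\alpha)$ with $\alpha\in L\setminus K$ and minimal polynomial $f(X)=X^2+bX+c\in K[X]$, the second root $-b-\alpha$ already lies in $L$ by Vieta's formulas, so $f$ splits in $L$; and any other irreducible $g\in K[X]$ with a root in $L$ is either linear or agrees with $f$ up to a unit. I would mention this remark both as a sanity check and to clarify the logical status of the Galois-fixed-field hypothesis, but the proof as written follows straight from Propositions~\ref{01} and~\ref{10}.
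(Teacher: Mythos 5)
Your proposal is correct and follows essentially the same route as the paper: the paper likewise gets the Noetherian conclusion from Proposition~\ref{01} and the normality from the fixed-field hypothesis, the only cosmetic difference being that it cites Proposition~\ref{07} (which yields normality directly) where you cite Proposition~\ref{10} and then pass from Galois to normal. Your closing remark that $[L\colon K]=2$ already forces normality without the hypothesis $L^{G(L\mid K)}=K$ is a correct and worthwhile observation, though not part of the paper's argument.
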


\begin{proof}
	Of course, from Propositon \ref{01} we get $K+XL[X]$ is Noetherian. By Proposition \ref{07} we have $K\subset L$ be normal fields extension.  
\end{proof}

\section{Fields extension and ideals in composites}

In this chapter, let's consider how can ideals be characterized in such polynomial composites, assuming that a given fields extension is a certain type?	
	
\begin{pr}
	\label{17}
	Let $K\subset L$ be fields and algebraic extension such that $K=L^{G(L\mid K)}$. Then every ideal of $K+XL[X]$ is finite generated.
\end{pr}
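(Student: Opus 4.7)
The plan is to recognize that the conclusion "every ideal of $K+XL[X]$ is finitely generated" is exactly the definition of $K+XL[X]$ being a Noetherian ring, so the task reduces to showing Noetherianity under the given hypotheses. This should follow directly from the earlier propositions without any new work.

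First I would observe that the hypothesis $K = L^{G(L\mid K)}$, combined with $K \subset L$ being algebraic, is precisely the definition of $K \subset L$ being a Galois extension in the sense used throughout the paper (compare the reasoning in the forward direction of Proposition \ref{02} and Proposition \ref{10}). Once we know the extension is Galois, Proposition \ref{10} applies immediately and yields that $T = K + XL[X]$ is Noetherian. Alternatively, one can cite Proposition \ref{02} directly: its hypotheses are exactly $L^{G(L\mid K)} = K$ together with $K \subset L$ algebraic, and its conclusion is that $T$ is Noetherian.

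Finally, I would close the argument by unwinding definitions: a ring is Noetherian precisely when every ideal is finitely generated, so the Noetherianity of $K + XL[X]$ given by Proposition \ref{10} (equivalently Proposition \ref{02}) yields the desired claim about ideals.

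There is no real obstacle here; the only thing to check is that the hypotheses of the cited proposition match ours verbatim, which they do. The statement is effectively a restatement of the previously established characterizations in terms of the ideal-theoretic definition of Noetherianity rather than the ascending chain condition.
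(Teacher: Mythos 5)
Your proposal is correct and follows essentially the same route as the paper: the paper's own proof simply invokes Proposition \ref{02} (whose hypotheses---$K\subset L$ algebraic with $L^{G(L\mid K)}=K$---match those of Proposition \ref{17}) to conclude that $K+XL[X]$ is Noetherian, and then unwinds the definition of Noetherian as ``every ideal is finitely generated.'' Your alternative citation of Proposition \ref{10} via the Galois extension is a harmless detour but not needed.
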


\begin{proof}
	By Propositon \ref{02} we have that $K+XL[X]$ is Noetherian. Hence every ideal of $K+XL[X]$ is finite generated.
\end{proof}

\begin{pr}
	\label{18}
	Let $K\subset L$ be fields and finite extension. Then every ideal of $K+XL[X]$ is finite generated.
\end{pr}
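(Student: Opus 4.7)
The plan is to reduce this statement directly to Proposition~\ref{01}, which is the hard technical fact in this section. Since $K \subset L$ is a finite field extension, i.e.\ $[L\colon K]<\infty$, Proposition~\ref{01} applied in the reverse direction tells us that $T = K + XL[X]$ is Noetherian. The ring $T$ being Noetherian is, by one of the equivalent definitions given in the introduction, exactly the statement that every ideal of $T$ is finitely generated, which is the conclusion we want.

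So the proof is essentially one line: invoke Proposition~\ref{01} to get Noetherianity, then invoke the definition of Noetherian. There is no real obstacle here; all the work was already done inside Proposition~\ref{01}, which relied on P.M.~Eakin's theorem to lift Noetherianity from $L[X]$ to the subring $K + XL[X]$ under the module-finiteness provided by $[L\colon K]<\infty$.

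If one wanted to make the argument fully self-contained without citing Proposition~\ref{01}, the route would be: observe that $L[X]$ is a Noetherian ring (being a polynomial ring in one variable over a field, hence a PID), and that $L[X]$ is a finitely generated $K+XL[X]$-module because $L$ itself is a finite-dimensional $K$-vector space, so any $K$-basis of $L$ generates $L[X]$ over $K+XL[X]$. Then Eakin's theorem yields that $K+XL[X]$ is Noetherian, and thus all its ideals are finitely generated. But since Proposition~\ref{01} is already available, the short appeal to it is the cleanest option.
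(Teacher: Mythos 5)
Your proposal is correct and is essentially identical to the paper's own proof: both invoke Proposition~\ref{01} to conclude that $K+XL[X]$ is Noetherian from $[L\colon K]<\infty$, and then use the definition of Noetherian to get that every ideal is finitely generated. The optional self-contained route you sketch via Eakin's theorem is just an unfolding of the proof of Proposition~\ref{01} itself, so nothing genuinely different is happening.
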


\begin{proof}
	By Propositon \ref{01} we have that $K+XL[X]$ is Noetherian. Hence every ideal of $K+XL[X]$ is finite generated.
\end{proof}
	
Unfortunately, the following questions have arisen at the moment.
	
\medskip
	
\noindent
{\bf Questions}
\label{19}
	
\begin{itemize}
	\item[1. ]
	What is the additional argument of the Proposition \ref{17} if we assume that fields extension be separable?
		
	\item[2. ]
	What is the additional argument of the Propostion \ref{17} if we assume that fields extension be normal?
\end{itemize}

\section{Full characterization}
\label{R5}

In this section, we present the full possible characterization of field extensions. Combining the Magid results and from this paper, we get the following two theorems.

\begin{tw}[\cite{Magid}, Theorem 1.2.]
	\label{t1}
	Let $M$ be an algebraically closed field algebraic over $K$, and let $L$ such that $K\subseteq L\subseteq M$ be an intermediate field. Then the following are equivalent:
	\begin{itemize}
		\item[(a) ] $L$ is separable over $K$.
		\item[(b) ] $M\otimes_K L$ has no nonzero nilpotent elements.
		\item[(c) ] Every element of $M\otimes_K L$ is a unit times an idempotent.
		\item[(d) ] As an $M$-algebra $M\otimes_KL$ is generated by idempotents.
	\end{itemize}
\end{tw}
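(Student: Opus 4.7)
The plan is to close the loop (a) $\Rightarrow$ (d) $\Rightarrow$ (c) $\Rightarrow$ (b) $\Rightarrow$ (a). The backbone of the argument is to reduce to the finite case via directed colimits and then exploit the concrete structure of $M[X]/(f)$ for $f$ the minimal polynomial of a primitive element.

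For (a) $\Rightarrow$ (d), I would write $L$ as the directed union of its finite subextensions $L_0 = K(\alpha)$. Separability of $L/K$ together with the primitive element theorem ensures that the minimal polynomial $f$ of $\alpha$ over $K$ has pairwise distinct roots in the algebraically closed field $M$. The Chinese Remainder Theorem then gives $M \otimes_K L_0 \cong M[X]/(f) \cong M^{[L_0:K]}$, and this product of copies of $M$ is visibly generated as an $M$-algebra by its primitive idempotents. Since tensor product commutes with directed colimits, and the property ``generated as an $M$-algebra by idempotents'' is preserved under colimits, the conclusion transfers to $M \otimes_K L$.

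For (d) $\Rightarrow$ (c) $\Rightarrow$ (b), I would argue purely ring-theoretically. The idempotents of any commutative ring form a Boolean algebra under $e \wedge f = ef$ and $e \vee f = e + f - ef$, so any element of the form $x = \sum_{i=1}^n m_i e_i$ with $m_i \in M$ can be rewritten, after refining the $e_i$ into pairwise orthogonal idempotents $f_1, \ldots, f_r$, as $x = \sum_j a_j f_j$. Setting $e = \sum_{a_j \neq 0} f_j$ and $u = x + (1 - e)$ produces a unit with $x = ue$, which gives (c). The step (c) $\Rightarrow$ (b) is then immediate: a nilpotent $x = ue$ satisfies $x^n = u^n e = 0$, which forces $e = 0$ since $u$ is a unit, and hence $x = 0$.

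For (b) $\Rightarrow$ (a), I would argue by contrapositive. If some $\alpha \in L$ has inseparable minimal polynomial $f \in K[X]$, then in $M[X]$ we have $f = \prod_j (X - \beta_j)^{e_j}$ with at least one $e_j \geq 2$, so $M \otimes_K K(\alpha) \cong M[X]/(f)$ already contains a nonzero nilpotent, visible in the CRT component $M[X]/(X - \beta_j)^{e_j}$. The main obstacle is to transport this nilpotent up into $M \otimes_K L$: one needs to know that tensoring the inclusion $K(\alpha) \hookrightarrow L$ over $K$ with $M$ preserves injectivity. This is where flatness of $M$ as a $K$-module (automatic, since $K$ is a field) enters, yielding an injection $M \otimes_K K(\alpha) \hookrightarrow M \otimes_K L$, and hence a nonzero nilpotent in the target, contradicting (b).
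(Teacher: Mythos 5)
Your argument is correct, but note that the paper does not prove this statement at all: Theorem \ref{t1} is imported verbatim from Magid's book and the ``proof'' is just the citation, so there is no in-paper argument to compare against. What you have supplied is a clean, self-contained derivation of the special case actually needed here (algebras of the form $M\otimes_K L$ with $M$ an algebraic closure of $K$), closing the cycle (a)$\Rightarrow$(d)$\Rightarrow$(c)$\Rightarrow$(b)$\Rightarrow$(a). All four steps check out: the reduction of (a) to finite subextensions via the primitive element theorem and the Chinese Remainder Theorem isomorphism $M\otimes_K K(\alpha)\cong M[X]/(f)\cong M^{[K(\alpha):K]}$ is standard and passes correctly to the directed colimit, since images of idempotents are idempotents; the Boolean-algebra refinement into orthogonal idempotents $f_j$ and the unit $u=x+(1-e)$ with inverse $\sum a_j^{-1}f_j+(1-e)$ is valid in any commutative ring, so (d)$\Rightarrow$(c)$\Rightarrow$(b) needs no hypotheses on $M$ or $L$; and the contrapositive (b)$\Rightarrow$(a) correctly locates a nilpotent $\overline{X-\beta_j}$ in a CRT factor $M[X]/((X-\beta_j)^{e_j})$ with $e_j\geq 2$ and transports it injectively into $M\otimes_K L$ by flatness over the field $K$. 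Compared with Magid's treatment, which develops the statement inside a general theory of separable algebras and Boolean spectra of commutative rings, your route is more elementary and makes the equivalences transparent for this field-theoretic case, at the cost of not generalizing beyond field extensions; either way, the paper itself would benefit from recording an argument like yours rather than relying solely on the external reference.
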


\begin{tw}[\cite{Magid}, Theorem 1.3.]
	\label{t2}
	Let $M$ be an algebraically closed field containing $K$, and let $L$ be a field algebraic over $K$. Then the following are equivalent:
	\begin{itemize}
		\item[(a) ] $L$ is separable over $K$.
		\item[(b) ] $M\otimes_K L$ has no nonzero nilpotent elements.
		\item[(c) ] Every element of $M\otimes_K L$ is a unit times an idempotent.
		\item[(d) ] As an $M$-algebra $M\otimes_KL$ is generated by idempotents.
	\end{itemize}
\end{tw}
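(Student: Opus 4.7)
The approach is to reduce Theorem~\ref{t2} to Theorem~\ref{t1}. Let $\bar K$ denote the algebraic closure of $K$ inside $M$; since $M$ is algebraically closed, $\bar K$ is itself algebraically closed and is algebraic over $K$. Because $L$ is algebraic over $K$, Lemma~\ref{ll1} yields a $K$-embedding $L\hookrightarrow \bar K$, so we may identify $L$ with its image and regard it as an intermediate field $K\subseteq L\subseteq \bar K$. Theorem~\ref{t1} then applies to the triple $(K,L,\bar K)$ and gives the equivalence of (a)--(d) for the $\bar K$-algebra $A:=\bar K\otimes_K L$. Associativity of the tensor product gives
\[
M\otimes_K L \;\cong\; M\otimes_{\bar K} A,
\]
so the task reduces to transferring each of the four conditions from $A$ to its base change along the field extension $\bar K\subseteq M$.

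Condition (a) depends only on $K\subseteq L$ and so is unchanged. Condition (d) transfers in both directions: a family of idempotents generating $A$ as a $\bar K$-algebra becomes, via $e\mapsto 1\otimes e$, a family of idempotents generating $M\otimes_{\bar K} A$ as an $M$-algebra, and faithful flatness of $M$ over $\bar K$ supplies the converse. For (b), the descending direction is immediate because $A$ embeds in $M\otimes_{\bar K} A$. One need not transfer (c) separately: in every commutative ring, (d)$\Rightarrow$(c)$\Rightarrow$(b) holds for free---the first because a commutative algebra over a field generated by finitely many idempotents is a finite product of copies of that field (in which every element is plainly a unit times an idempotent), and the second because $(ue)^n=u^n e=0$ forces $e=0$, hence $ue=0$. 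So transferring (a), (b), and (d) suffices to close the equivalence cycle for $M\otimes_K L$.

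The main obstacle is the ``ascending'' direction of (b): if $A$ is reduced, then so is $M\otimes_{\bar K} A$. This is the standard fact that reducedness of algebras over a perfect field is preserved under arbitrary field base change, applied here with the algebraically closed---hence perfect---base $\bar K$. In our algebraic setting it suffices to treat finitely generated sub-extensions: under the hypothesis, $L/K$ is separable (by Theorem~\ref{t1}), so every finite sub-extension $L'/K$ is finite separable; then $\bar K\otimes_K L'$ is a finite product of copies of $\bar K$, whose base change to $M$ is the corresponding product of copies of $M$, still reduced. Writing $A$ as the filtered union of its subalgebras $\bar K\otimes_K L'$ and noting that any hypothetical nilpotent element would already lie in $M\otimes_{\bar K}(\bar K\otimes_K L')$ for some such $L'$ completes the argument.
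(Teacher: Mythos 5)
The paper does not actually prove this statement: Theorem \ref{t2} is imported verbatim from Magid's book (Theorem 1.3 there) with no argument given, so there is no ``paper's proof'' to match. Your proposal, by contrast, is a genuine derivation of Theorem \ref{t2} from Theorem \ref{t1}, and it is essentially correct: embedding $L$ into $\bar K$ via Lemma \ref{ll1}, applying Theorem \ref{t1} to $K\subseteq L\subseteq\bar K$, and using $M\otimes_K L\cong M\otimes_{\bar K}(\bar K\otimes_K L)$ is exactly the standard reduction, and your cycle (a)$\Rightarrow$(d)$\Rightarrow$(c)$\Rightarrow$(b)$\Rightarrow$(a) for $M\otimes_K L$ closes correctly, since (d)$\Rightarrow$(c)$\Rightarrow$(b) are elementary ring-theoretic implications and (b) descends to the subring $\bar K\otimes_K L$ where Theorem \ref{t1} applies. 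Two remarks. First, your appeal to ``faithful flatness'' for the \emph{descent} of condition (d) is an over-claim --- faithfully flat descent of the property ``generated by idempotents'' is not immediate --- but this direction is never used in your cycle, so it costs nothing. Second, the entire paragraph on the ``ascending'' direction of (b) (reducedness ascending along $\bar K\subseteq M$) is likewise superfluous for the same reason: once (a)$\Rightarrow$(d) is transferred upward and (d)$\Rightarrow$(c)$\Rightarrow$(b) is observed, the ascent of (b) comes for free. What your approach buys over the paper's bare citation is a self-contained bridge between the two Magid theorems that uses only the lemmas already quoted in Section 2; what it costs is some redundancy that could be trimmed.
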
	

Below we have conclusions from the above results.

\begin{tw}
	\label{t3}
	In Theorems \ref{t1} and \ref{t2} if assume $L^{G(L\mid K)}=K$, then conditions (a) -- (d) are equivalent to
	\begin{itemize}
		\item[(e) ] $K+XL[X]$ be a Noetherian.
		\item[(f) ] $[L\colon K]<\infty$
		\item[(g) ] $K\subset L$ be an algebraic extension.
		\item[(h) ] $K\subset L$ be a Galois extension.
	\end{itemize}
\end{tw}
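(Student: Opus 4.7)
The plan is to splice the chain $(a)\Leftrightarrow(b)\Leftrightarrow(c)\Leftrightarrow(d)$ already supplied by Theorems~\ref{t1} and~\ref{t2} onto a new chain $(e)\Leftrightarrow(f)\Leftrightarrow(g)\Leftrightarrow(h)$ built entirely from the propositions proved earlier in this paper, and then to bridge the two chains by a single implication at one end. With the standing hypothesis $L^{G(L\mid K)}=K$ in force, the four intermediate equivalences are essentially immediate citations:
\begin{itemize}
\item[(i)] Proposition~\ref{01} gives $(e)\Leftrightarrow(f)$ with no further assumption;
\item[(ii)] Proposition~\ref{02} gives $(e)\Leftrightarrow(g)$ under $L^{G(L\mid K)}=K$;
\item[(iii)] Proposition~\ref{10} gives $(e)\Leftrightarrow(h)$ under $L^{G(L\mid K)}=K$.
\end{itemize}
Combining (i)--(iii) closes the lower block.

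To splice on the upper block (a)--(d), I would produce one direct equivalence between the two chains, most conveniently $(a)\Leftrightarrow(h)$. The direction $(h)\Rightarrow(a)$ is the textbook fact that any Galois extension is separable, so no work is required. For $(a)\Rightarrow(h)$ I would invoke the classical characterization of Galois extensions: an algebraic extension $K\subset L$ is Galois exactly when $L^{G(L\mid K)}=K$; the setups of Theorems~\ref{t1} and~\ref{t2} already force $K\subset L$ to be algebraic (in~\ref{t1} because $L\subseteq M$ and $M/K$ is algebraic, in~\ref{t2} by explicit hypothesis), so that criterion applies verbatim once $L^{G(L\mid K)}=K$ has been assumed. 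In particular one does not even need to re-invoke separability on this side; the added hypothesis already pins down Galois, which in turn yields separable and feeds back into (a)--(d) via Theorem~\ref{t1} or~\ref{t2}.

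The main obstacle, and the only point where one should slow down, is the coherence of the Galois-group data between the two blocks. Theorems~\ref{t1} and~\ref{t2} allow $L/K$ to be an arbitrary algebraic (possibly infinite) separable extension, while Propositions~\ref{01}--\ref{10} as used in (i)--(iii) connect Noetherianity of $K+XL[X]$ with $[L:K]<\infty$. So one must verify that once we assume $L^{G(L\mid K)}=K$ together with the algebraicity provided by the setups of Theorems~\ref{t1} and~\ref{t2}, the equivalence $(e)\Leftrightarrow(g)$ of Proposition~\ref{02} and the equivalence $(e)\Leftrightarrow(h)$ of Proposition~\ref{10} genuinely apply, and in particular that the hypothesis $L^{G(L\mid K)}=K$ appearing in those two propositions refers to the same Galois group as in Theorems~\ref{t1} and~\ref{t2}. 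Once this identification is noted, the rest is a matter of transitively chaining the biconditionals and concluding that $(a),(b),(c),(d),(e),(f),(g),(h)$ are pairwise equivalent.
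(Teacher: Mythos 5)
Your proposal follows essentially the same skeleton as the paper's proof: the lower block $(e)\Leftrightarrow(f)\Leftrightarrow(g)\Leftrightarrow(h)$ is obtained by citing Propositions~\ref{01}, \ref{02} and \ref{10}, and the two blocks are bridged through the fact that a Galois extension is separable. The one genuine difference is the leg $(a)\Rightarrow(h)$: the paper routes it through the composite machinery as $(a)\Rightarrow(g)\Rightarrow(e)\Rightarrow(h)$ (separable $\Rightarrow$ algebraic, then Proposition~\ref{02}, then Proposition~\ref{10}), whereas you argue it directly by Artin's fixed-field criterion --- algebraic plus $L^{G(L\mid K)}=K$ already forces Galois. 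Your bridge is more elementary and self-contained, but note what it reveals: under the standing hypotheses of Theorems~\ref{t1} and~\ref{t2}, condition $(h)$ (hence $(a)$--$(d)$ and $(g)$) holds outright, independently of $(a)$, so the entire content of the claimed equivalence with $(e)$ and $(f)$ rests on the finiteness assertions inside Propositions~\ref{02} and~\ref{10} (``every normal/Galois extension is finite''), which is exactly the point one should be suspicious of, since infinite algebraic extensions with $L^{G(L\mid K)}=K$ exist. You flag the infinite-degree tension in your final paragraph but do not resolve it; the paper's proof silently inherits the same issue, so as a reconstruction of the paper's argument your proposal is faithful.
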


\begin{proof}
	(h)$\Rightarrow$(a) -- Obvious.
	
	\medskip
	
	(a)$\Rightarrow$(g)$\Rightarrow$(e)$\Rightarrow$(h) If $K\subset L$ be a separable extension, then be an algebraic extension. By Proposition \ref{02} $K+XL[X]$ be a Noetherian. By Proposition \ref{10} $K\subset L$ be a Galois extension.
	
	\medskip
	
	(e)$\Rightarrow$(f) -- Proposition \ref{01}.
\end{proof}

\begin{tw}
	\label{t4}
	In Theorem \ref{t3} if assume $K$ be a perfect field and $L^{G(L\mid K)}=K$, then conditions (a) -- (h) are equivalent to \\ 
	(g) $K\subset L$ be a normal extension.
\end{tw}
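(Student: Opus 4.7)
The plan is to splice one new condition into the equivalence chain of Theorem \ref{t3} by exhibiting it as a two-sided neighbour of some condition already in the chain. Theorem \ref{t3} (under the running hypothesis $L^{G(L\mid K)} = K$) already gives (a)--(h) pairwise equivalent, so it suffices to prove the new normal clause equivalent to any single one of them; I would pair it with (h), the Galois condition, since the two differ only by separability.

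For the direction (h) $\Rightarrow$ normal, nothing new is needed: by definition a Galois extension is both separable and normal, so in particular normal. This direction does not invoke perfectness of $K$ at all. For the reverse direction normal $\Rightarrow$ (h), I would use the perfect hypothesis: a normal extension is, by definition, algebraic, and every algebraic extension of a perfect field is separable; hence normal $+$ $K$ perfect $=$ normal $+$ separable $=$ Galois. An alternative and entirely equivalent route goes through (e): Proposition \ref{07} already asserts that under $L^{G(L\mid K)} = K$ the ring $K + XL[X]$ is Noetherian iff $L/K$ is normal, so the normal clause is forced into the chain via (e) without even touching the perfect hypothesis. I would present both routes in the final write-up, since together they clarify which hypothesis is doing what.

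The main obstacle is not analytical but editorial. The new clause in the statement of Theorem \ref{t4} is labelled ``(g)'', a label already occupied in Theorem \ref{t3} by the algebraic condition; before writing the proof I would relabel the new clause (as (i), say) so that the equivalence chain is unambiguous. A secondary sanity check I would carry out is whether the hypothesis ``$K$ is perfect'' is actually needed given that $L^{G(L\mid K)} = K$ is already assumed, since the latter alone, combined with Proposition \ref{07} and Theorem \ref{t3}, already delivers every equivalence. Apart from these bookkeeping concerns, the proof is a short assembly of Propositions \ref{07} and \ref{10} with the basic definitions of normal, separable, and Galois extensions.
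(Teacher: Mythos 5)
Your proposal is correct and its main route is essentially the paper's own argument: the paper proves (h)$\Rightarrow$(g) as ``obvious'' (Galois is normal by definition) and (g)$\Rightarrow$(a) by noting a normal extension is algebraic and then invoking perfectness of $K$ to get separability, which is exactly your pairing of the normal clause with the Galois/separable conditions. Your side remarks --- that the label (g) collides with the algebraic condition of Theorem \ref{t3}, and that Proposition \ref{07} together with $L^{G(L\mid K)}=K$ already splices the normal clause in via (e) without the perfect hypothesis --- are sound observations that go beyond what the paper records, but the core proof is the same.
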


\begin{proof}
	(g)$\Rightarrow$(a) If $K\subset L$ be a normal extension, then be an algebraic extension. By definition perfect field $K\subset L$ be a separable extension.
	
	\medskip
	
	(h)$\Rightarrow$(g) Obvious.
\end{proof}

Proposition \ref{10}, Theorems \ref{t3} and \ref{t4} can be used to solve the inverse Galois problem. There is a lot of work. And it is enough to solve the problem for nonabelian groups. Thus, the following question arises:

\medskip

\noindent
{\bf Question 2:}\\
Can all the statements in this paper operate in noncommutative structures?

\medskip

And another question also arises regarding polynomial composites:

\medskip

\noindent
{\bf Question 3:}\\
Under certain assumptions for any type of $K\subset L$, we get that $K+XL[X]$ be a Noetherian ring. When can $K+XL[X]$ be isomorphic to any Noetherian ring?

\end{document}